\newcommand{\showgrid}{}
\newcommand{\gridon}{\renewcommand{\showgrid}{\psset{subgriddiv=1,griddots=10,gridlabels=6pt}\psgrid}}
\long\def\comment#1{%
\psshadowbox[%
fillstyle=solid,fillcolor=dummycolor,linewidth=0.05%
]{\hbox to 12.70cm {\vbox{\strut {\bf Kommentar:} #1\hfill}}}%
}
\newif\ifenglish
\newif\ifvariant
\def\bit{\begin{itemize}}
\def\eit{\end{itemize}}
\def\beq{\begin{equation}}
\def\eeq{\end{equation}}
\def\EM#1{{\em #1}}
\def\figref#1{\ifenglish Figure\else Abbildung\fi~\ref{#1}}
\def\secref#1{\ifenglish Section\else Abschnitt\fi~\ref{#1}}
\def\N{{\mathbb N}}
\def\1{{\mathbf 1}}
\def\0{{\mathbf 0}}
\def\defeq{:=}
\def\of#1{\left(#1\right)}
\def\setof#1{\left\{#1\right\}}
\def\pas#1{\left(#1\right)}
\def\brk#1{\left[#1\right]}
\def\sgn{\operatorname{sgn}} 
\def\range#1{\left[#1\right]}  
\def\disjuni{\,\dot{\cup}\,}
\def\card#1{\left\vert{#1}\right\vert}
\def\weight{{\mathbf\omega}}
\def\symm{\mathfrak{S}}
\newif\ifenglish
\newtheorem{thm}{\ifenglish Theorem\else Satz\fi}
\newtheorem{lem}{Lemma}
\newtheorem{cor}{\ifenglish Corollary\else Korollar\fi}
\theoremstyle{remark}
\def\secref#1{Section~\ref{#1}}
\def\weight{\omega}
\def\concat{{\scriptstyle\circ}}
\def\numof#1{\#\of{#1}}                % number of a set
\def\symm{\mathfrak{S}}
\def\disjuni{\,\dot{\cup}\,}
\def\and{\wedge}
\def\myrange#1{\setof{1,\dots,k}}
\def\cop{{\sl\bf cp}}
\def\coc{{\sl\bf cocp}}
\def\pointconf{configuration of (starting/ending) points (short: \cop){\gdef\pointconf{\cop}}}
\def\pointarr{circular orientation of coloured (starting/ending) points (short: \coc){\gdef\pointarr{\coc}}}
\def\matminor#1#2#3{{#1}_{#2,#3}}
\def\matcominor#1#2#3{{#1}_{\overline{#2},\overline{#3}}}
\def\domain{\operatorname{domain}}
\def\image{\operatorname{image}}
\def\speciesname#1{\mathtt{#1}}
\def\signsumset#1#2{\sgn\of{#1\!\trianglelefteq\!#2}}
\def\concat{\star}
\def\complies{\subseteq}
\def\scheme#1{\brk{#1}}
\def\compat#1#2{{\mathbf #1}_{\brk{#2}}}
\def\complementoffirst#1#2{{\mathbf #1}_{#2}}
\newif\iflongversion
\begin{document}

\bibliographystyle{plain}

\title{A combinatorial proof for Cayley's identity}

\begin{abstract}
In \cite{Caracciolo:2013}, Caracciolo, Sokal and Sportiello presented, {\it inter alia},
an algebraic/combinatorial proof for Cayley's identity. The purpose of the present
paper is to give a ``purely
combinatorial'' proof for this identity; i.e., a proof involving only combinatorial arguments together
with a generalization of Laplace's Theorem \cite[section 148]{Muir:1933}, for which
a ``purely combinatorial'' proof is given in \cite[proof of Theorem 6]{Fulmek:2012}.
\end{abstract}

\author{Markus Fulmek}
\address{Fakult\"at f\"ur Mathematik, % \\ Universit\"at Wien\\
Oskar-Morgenstern-Platz 1, A-1090 Wien, Austria}
\email{{\tt Markus.Fulmek@Univie.Ac.At}\newline\leavevmode\indent
{\it WWW}: {\tt http://www.mat.univie.ac.at/\~{}mfulmek}
}

\date{\today}
\thanks{
Research supported by the National Research Network ``Analytic
Combinatorics and Probabilistic Number Theory'', funded by the
Austrian Science Foundation. 
}

\maketitle

\section{Introduction}
\label{sec:intro}

For $n\in\N$, denote by $\range{n}$ the set $\setof{1,2,\dots,n}$ and let $X=X_n
%= \matminor{X}{\range{n}}{\range{n}} 
=\pas{x_{i,j}}_{\pas{i,j}\in\range{n}\times\range{n}}$
be an $n \times n$ matrix of indeterminates. For $I\subseteq\range{n}$ and $J\subseteq\range{n}$, we
denote
\bit
\item the \EM{minor} of $X$ corresponding to the rows $i\in I$ and the columns $j\in J$ by
$\matminor{X}{I}{J}$,
\item the cominor of $\matminor X I J$ % in $X$
(which corresponds to the rows $i\not\in I$ and the columns
$j\not\in J$) by $\matcominor{X}{I}{J}$.
\eit

Let $M=\setof{x_1\leq x_2\leq\dots\leq x_m}$ be a finite ordered
set, and let $S=\setof{x_{i_1},\dots, x_{i_k}}\subseteq M$ be a subset of $M$. We define
% $$\sumset{S}{M}\defeq\sum_{j=1}^k i_j.$$
$$\signsumset{S}{M}\defeq\pas{-1}^{\sum_{j=1}^k i_j}.$$

% and we
%denote the sign of this minor by $\sgn\of{I,J}\defeq\pas{-1}^{\sum_{i\in I} i - \sum_{j\in J} j}$.
%Finally, we denote the cominor of $\matminor X I J$ % in $X$
%(which corresponds to the rows $i\not\in I$ and the columns
%$j\not\in J$) by $\matcominor{X}{I}{J}$.

%%%%%%%%%%%%%%%%%%%%%%%%%%
%\begin{figure}
%\caption{View permutation $\pi=\pas{2 5 1 4 3}$ as a (perfect) matching
%in the bipartite graph $K_{5,5}$.}
%\label{fig:example1}
%\input graphics/example1
%\end{figure}
As pointed out in \cite[Section 2.6]{Caracciolo:2013}, the following identity is
conventionally but erroneously attributed to % Arthur 
Cayley. (Muir
\cite[vol. 4, p. 479]{Muir:1906} attributes this identity to Vivanti \cite{Vivanti:1890}.)
%2.6. Historical remarks
%As noted in the Introduction, the identity (1.1) is conventionally attributed to Arthur Cayley (1821–
%1895); the generalization (1.2) to arbitrary minors is sometimes attributed to Alfredo Capelli (1855–
%1910). The trouble is, neither (1.1) nor (1.2) occurs anywhere – as far as we can tell – in the Collected
%Papers of Cayley [26]. Nor are we able to find these formulae in any of the relevant works of Capelli
%[15–19]. The operator Ω = det(∂) was indeed introduced by Cayley on the second page of his famous
%1846 paper on invariants [25]; it became known as Cayley’s Ω-process and went on to play an important
%role in classical invariant theory (see e.g. [34,39,75,88,111]). But we strongly doubt that Cayley
%ever knew (1.1).
%A detailed history of (1.1) and (1.2) will be presented elsewhere [3]. Suffice it to say that the special
%case for 2?2 matrices appears already in the 1872 book of Alfred Clebsch (1833–1872) on the invariant
%theory of binary forms [29, p. 20]. But even for n = 3, the first unambiguous statement of which
%we are aware appears in an 1890 paper of Giulio Vivanti (1859–1949) [106].16 And amazingly, in this
%16 This paper is also cited in Muir’s massive annotated bibliography of work on the theory of determinants [70, vol. 4, p. 479].
%Indeed, it was thanks to Muir that we discovered Vivanti’s paper. Malek Abdesselam has also drawn our attention to the papers
%of Clebsch (1861) [28, pp. 7–14] and Gordan (1872) [45, pp. 107–116], where formulae closely related to (1.1) can be found.
\begin{thm}[Cayley's Identity]
\label{thm:cayley}
Consider $X = \pas{x_{i,j}}_{\pas{i,j}\in\range{n}\times\range{n}}$, and let
$\partial = \pas{\frac{\partial}{\partial x_{i,j}}}$ be the corresponding
$n \times n$ matrix of partial derivatives\footnote{$\partial$ is also known as \EM{Cayley's $\Omega$--process}.}.
Let $I,J\subseteq\range{n}$ with $\card{I}=\card{J}=k$. Then we have for $s\in\N$:
\begin{multline}
\label{eq:cayley}
\det\of{\matminor{\partial}{I}{J}}\pas{\det\of X}^s = \\
%\sgn\of{I,J}
s\cdot\pas{s+1}\cdots\pas{s+k-1}\cdot\pas{\det\of X}^{s-1}\cdot
\signsumset{I}{\range n}\cdot\signsumset{J}{\range n}
\cdot\det\of{\matcominor{X}{I}{J}}.
\end{multline}
\end{thm}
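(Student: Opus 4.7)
The plan is to interpret both sides of \eqref{eq:cayley} as signed sums over combinatorial configurations and to construct a sign-preserving bijection between them. On the LHS, expanding $\pas{\det\of X}^s = \sum_{\sigma_1,\dots,\sigma_s\in\symm_n} \prod_t \sgn\of{\sigma_t} \prod_{i,t} x_{i,\sigma_t(i)}$ and $\det\of{\matminor{\partial}{I}{J}} = \sum_{\tau:I\to J} \sgn\of{\tau} \prod_{i\in I} \partial_{i,\tau(i)}$, the Leibniz rule forces each $\partial_{i,\tau(i)}$ to select one of the $s$ copies of $x_{i,\tau(i)}$ present in the monomial, i.e.\ an index $t_i\in\range{s}$ with $\sigma_{t_i}(i)=\tau(i)$. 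Hence the LHS of \eqref{eq:cayley} is a signed sum over configurations $\pas{\tau,\sigma_1,\dots,\sigma_s,\pas{t_i}_{i\in I}}$ subject to $\sigma_{t_i}(i)=\tau(i)$ for all $i\in I$, with monomial weight obtained from $\prod_{j,t}x_{j,\sigma_t(j)}$ by deleting the selected factor $x_{i,\tau(i)}$ for each $i\in I$, and sign $\sgn\of{\tau}\prod_t \sgn\of{\sigma_t}$. Analogously, the RHS is a signed sum over tuples $\pas{\rho_1,\dots,\rho_{s-1},\pi:\range{n}\without I\to\range{n}\without J}$, each weighted by the scalar factor $s\pas{s+1}\cdots\pas{s+k-1}$.

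My plan for the bijection itself is an iterative insertion procedure along the ordered set $I=\setof{i_1<\dots<i_k}$. The LHS configurations should be in signed bijection with pairs (RHS configuration, insertion sequence), where an insertion sequence is a tuple $\pas{u_1,\dots,u_k}$ with $u_j\in\range{s+j-1}$; there are exactly $s\pas{s+1}\cdots\pas{s+k-1}$ such sequences. Conceptually, starting from an RHS configuration $\pas{\rho_1,\dots,\rho_{s-1},\pi}$, I first turn $\pi$ into a partial ``$s$-th column'' (the edges $\pas{i,\pi(i)}$ for $i\in\range{n}\without I$), producing $s$ columns in total. At step $j$, row $i_j$ is ``inserted'' into one of $s+j-1$ available positions, each insertion either extending an existing column by one edge or redistributing edges between columns; each such step is justified by a Laplace-style expansion along row $i_j$, which is exactly the content of \cite[Theorem 6]{Fulmek:2012}. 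After all $k$ steps one arrives at an LHS configuration $\pas{\tau,\sigma_1,\dots,\sigma_s,\pas{t_i}_{i\in I}}$, with $\tau$ and the indices $\pas{t_i}_{i\in I}$ read off from the insertion history; reading the procedure in reverse defines the inverse map.

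The principal difficulty will be sign tracking. The signs $\sgn\of{\tau}$, $\sgn\of{\pi}$, the products $\prod_t\sgn\of{\sigma_t}$ and $\prod_t\sgn\of{\rho_t}$, together with the factors $\signsumset{I}{\range n}\signsumset{J}{\range n}$, must combine consistently with the signs introduced by each Laplace-style insertion step. The generalized Laplace theorem of \cite{Fulmek:2012} is designed to produce precisely the sign-balanced decomposition required here, so once the insertion procedure is set up carefully I expect the signs to telescope correctly step by step. Verifying this telescoping, and confirming that every RHS configuration is produced exactly $s\pas{s+1}\cdots\pas{s+k-1}$ times, is the routine but technically demanding bulk of the argument.
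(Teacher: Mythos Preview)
Your approach and the paper's differ substantially.  The paper first disposes of general $I,J$ in one line, reducing to $I=J=\range{k}$ (Vivanti's Theorem) via the alternating property of the determinant; the factors $\signsumset{I}{\range n}\signsumset{J}{\range n}$ then disappear and all sign tracking happens inside $\range{k}$.  From there the paper does \emph{not} proceed row by row.  Instead it groups the LHS configurations by \emph{partition scheme}: the $s$-tuple of image-words obtained by restricting $\tau$ to the sets $\{i:t_i=l\}$.  A species argument shows there are exactly $s(s+1)\cdots(s+k-1)$ such schemes, and the paper proves that every scheme contributes $(\det X)^{s-1}\det\of{\matcominor{X}{\range k}{\range k}}$ by induction on the number $m$ of nonempty blocks, using the generalized Laplace identity (Theorem~\ref{thm:muir148}) once per step to merge the last two blocks into one.

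Your row-by-row induction on $k$ is a legitimate alternative, and an algebraic version of it goes through: expand $\det\of{\matminor{\partial}{\range k}{\range k}}$ along its last row, apply the rank-$(k{-}1)$ case, and then apply the product rule to $(\det X)^{s-1}\cdot\det(\text{cominor})$; one group of terms gives $k$ copies of the target and the other gives $s-1$ copies, the latter after one application of Theorem~\ref{thm:muir148} with $\card{I}=1$.  So the factor $s+k-1$ does appear at step $k$, but as $k+(s-1)$ from two heterogeneous sources (column index in the $\partial$-minor expansion versus which factor of $(\det X)^{s-1}$ is hit), not from a single uniform ``insertion into $s+k-1$ available positions''.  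This is the gap in your plan: the sentence ``row $i_j$ is inserted into one of $s+j-1$ available positions, extending a column or redistributing edges'' is not a definition of a map, and the $\card{I}=1$ case of Theorem~\ref{thm:muir148} is not a single-row Laplace expansion of one determinant but an identity for a \emph{product} of two determinants.  If you want an honest signed bijection along these lines you must unpack the bijective proof of Theorem~\ref{thm:muir148} at each step and compose; what you have written so far is a correct outline of an algebraic induction, not yet a bijection.  The paper's partition-scheme organisation buys a cleaner combinatorial meaning for the rising factorial (it literally counts the equivalence classes), at the cost of a slightly heavier inductive step.
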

By the alternating property of the determinant, Cayley's Identity is
in fact equivalent to the following special case of \eqref{eq:cayley}.

\begin{cor}[Vivanti's Theorem]
\label{cor:vivanti}
Specialize $I=J=\range{k}$ for some $k\leq n$ in Theorem~\ref{thm:cayley}.
Then we have for $s\in\N$:
\begin{multline}
\label{eq:vivanti}
\det\of{\matminor{\partial}{\range{k}}{\range{k}}}\pas{\det\of X}^s
=
s\cdot\pas{s+1}\cdots\pas{s+k-1}\cdot\pas{\det\of X}^{s-1}\cdot\det\of{\matcominor{X}{\range k}{\range k}}.
\end{multline}
\end{cor}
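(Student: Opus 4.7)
The plan is to prove the equivalent Cayley's identity (Theorem~\ref{thm:cayley}) --- which specialises to Vivanti's theorem at $I=J=\range{k}$ --- by induction on $s$; the general form is what the induction naturally produces. For the base case $s=1$, one must show
\begin{equation*}
\det(\partial_{I,J})\,\det X \;=\; |I|!\cdot \signsumset{I}{[n]}\,\signsumset{J}{[n]}\cdot \det X_{\overline I,\overline J}
\end{equation*}
for any $I,J\subseteq [n]$ of equal cardinality. This is proved combinatorially by expanding both operators in the permutation-sign basis: the operator $\prod_{i\in I}\partial_{i,\tau(i)}$ applied to $\det X$ picks out those $\sigma\in S_n$ with $\sigma|_I=\tau$, and the shuffle-sign decomposition $\operatorname{sgn}(\sigma)=\signsumset{I}{[n]}\,\signsumset{J}{[n]}\cdot \operatorname{sgn}(\tau)\,\operatorname{sgn}(\sigma|_{\overline I})$ lets one sum first over $\sigma|_{\overline I}$ (yielding $\det X_{\overline I,\overline J}$) and then over $\tau$ (yielding the factor $|I|!$).

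For the inductive step, factor $(\det X)^s=\det X\cdot (\det X)^{s-1}$ and apply the generalised Leibniz rule
\begin{equation*}
\det(\partial_{[k],[k]})(fg) \;=\; \sum_{\substack{S,T\subseteq [k]\\ |S|=|T|}} \signsumset{S}{[k]}\,\signsumset{T}{[k]} \cdot \det(\partial_{S,T})f \cdot \det(\partial_{[k]\setminus S,\,[k]\setminus T})g,
\end{equation*}
which itself follows combinatorially from the classical Leibniz rule $\prod_i\partial_{i,\tau(i)}(fg)=\sum_S\prod_{i\in S}\partial_{i,\tau(i)}f\cdot\prod_{i\notin S}\partial_{i,\tau(i)}g$ combined with the block decomposition of $\operatorname{sgn}(\tau)$ along $[k]=S\sqcup ([k]\setminus S)$. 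Substituting the base case on $\det(\partial_{S,T})\det X$, the induction hypothesis on $\det(\partial_{[k]\setminus S,\,[k]\setminus T})\,(\det X)^{s-1}$, and collecting signs using $\signsumset{S}{[k]}=\signsumset{S}{[n]}$ (for $S\subseteq [k]$) and $\signsumset{[k]\setminus S}{[n]}=\signsumset{[k]}{[n]}\,\signsumset{S}{[n]}$, the LHS of \eqref{eq:vivanti} reduces to
\begin{equation*}
(\det X)^{s-2}\cdot \sum_{i=0}^k i!\,(s-1)^{\overline{k-i}}\cdot A_i,
\end{equation*}
where $A_i:=\sum_{S,T\subseteq [k],\,|S|=|T|=i}\signsumset{S}{[n]}\,\signsumset{T}{[n]}\cdot \det X_{\overline S,\overline T}\,\det X_{\overline{[k]\setminus S},\,\overline{[k]\setminus T}}$.

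The main obstacle is the combinatorial identity
\begin{equation*}
A_i \;=\; \binom{k}{i}\cdot \det X\cdot \det X_{\overline{[k]},\,\overline{[k]}},
\end{equation*}
a bilinear Laplace-type expansion: each summand is a signed product of two minors sharing the $n-k$ row- and column-indices outside $[k]$, and the signed sum collapses to $\binom{k}{i}$ copies of the ``diagonal'' product $\det X\cdot\det X_{\overline{[k]},\overline{[k]}}$. This is precisely the statement addressed by the generalisation of Laplace's theorem from \cite[Theorem 6]{Fulmek:2012}, whose purely combinatorial proof uses a sign-reversing involution on the pairs of permutations underlying the two minors. Once $A_i$ is known, Vivanti's theorem follows by substitution and the rising-factorial Vandermonde identity
\begin{equation*}
\sum_{i=0}^k \binom{k}{i}\,i!\,(s-1)^{\overline{k-i}} \;=\; \sum_{j=0}^k \frac{k!}{j!}\,(s-1)^{\overline j} \;=\; s^{\overline k},
\end{equation*}
the $a=1$, $b=s-1$ case of $(a+b)^{\overline k}=\sum_j \binom{k}{j}\,a^{\overline{k-j}}\,b^{\overline j}$, itself elementary.
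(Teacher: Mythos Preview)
Your proof is correct, but it follows a genuinely different route from the paper's. The paper does \emph{not} induct on $s$; instead it gives a direct combinatorial interpretation of the left--hand side of \eqref{eq:vivanti} by viewing the action of $\partial_\tau$ on an $s$--tuple of permutation diagrams as ``erasing'' the edges of $\tau$, then distributing those $k$ edges among the $s$ copies. Grouping these distributions into equivalence classes indexed by \emph{partition schemes} (ordered tuples of permutation words), the rising factorial $s(s+1)\cdots(s+k-1)$ appears directly as the number of such classes, computed via the exponential generating function of the species $(\speciesname{Permutations})^s$. The remaining work is to show that every class contributes the same generating function $(\det X)^{s-1}\det X_{\overline{[k]},\overline{[k]}}$, which the paper does by an internal induction on the number $m$ of nonempty parts, reducing at each step via the generalised Laplace theorem (Theorem~\ref{thm:muir148}). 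Your approach instead peels off one factor of $\det X$ using a Leibniz--type expansion of $\det(\partial_{[k],[k]})$, applies the base case and induction hypothesis, collapses the resulting double sum over $(S,T)$ with the \emph{same} generalised Laplace identity (your $A_i$ is, for each fixed $S$, precisely the sum in \eqref{eq:muir148} with $R=C=[k]$ and $I=S$), and recovers the rising factorial via the Vandermonde identity $\sum_i\binom{k}{i}\,1^{\overline i}(s-1)^{\overline{k-i}}=s^{\overline k}$. Both proofs thus rest on the same determinantal lemma; what differs is how the factor $s(s+1)\cdots(s+k-1)$ arises --- as an enumerative count of partition schemes in the paper, versus as an algebraic convolution identity in your argument. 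The paper's route is more pictorial and explains the rising factorial combinatorially; yours is shorter and avoids the species machinery.
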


\section{Combinatorial proof of Vivanti's Theorem}
We may \EM{view} the % expansion of the
determinant of $X$
as the \EM{generating function} of all permutations $\pi$ in $\symm_n$,
where the (signed) weight of a permutation $\pi$ is given as
$\weight\of\pi\defeq\sgn\pi\cdot\prod_{i=1}^nx_{i,\pi\of i}$:
$$
\det\of X =\sum_{\pi\in \symm_n}\weight\of\pi. %\sgn{\pi}\cdot\prod_{i=1}^nx_{i,\pi\of i}.
$$
\subsection{View permutations as perfect matchings}
For our considerations, it is convenient to \EM{view} a permutation
$\pi\in\symm_n$ as a \EM{perfect matching} $m_\pi$ of the complete bipartite graph $K_{n,n}$,
where the vertices consist of two copies of $\range{n}$ which are arranged in their natural order;
see \figref{fig:permmatch} for an illustration of this simple idea.
It is easy to see that the edges of such perfect matching can be drawn
in a way such that all \EM{intersections} are of precisely two (and not more)
edges, and that the number of these intersections equals the number of
\EM{inversions} of $\pi$, whence the sign  of $\pi$ is
$$
\sgn\of\pi=\pas{-1}^{\numof{\text{intersections in }m_\pi}}.
$$
This simple visualization of permutations and their inversions is already used in \cite[\S 15, p.32]{Aitken:1956}: We call it the \EM{permutation diagram}.
% ``Aitken-diagram of permutation $\pi$'').
So assigning weight $x_{i,j}$ to the edge pointing from $i$ to $j$ and defining the
weight $\weight\of{m_\pi}$ of the permutation diagram $m_\pi$ % corresponding to $\pi$
to be the product of the edges belonging to $m_\pi$, we may write
$$
\weight\of\pi=\pas{-1}^{\numof{\text{intersections in }m_\pi}}\cdot\weight\of{m_\pi}.
$$
\begin{figure}
\caption{View the permutation $\pi=\pas{2 5 1 4 3}$ as the corresponding perfect matching
$m_\pi$ in the complete bipartite graph $K_{5,5}$. The intersections of edges
are indicated by small circles; they correspond bijectively to $\pi$'s inversions:
$$\numof{\text{inversions of }\pi}=\card{\setof{(1,3),(2,4),(2,5),(4,5)}}=4.$$
Assigning weight $x_{i,j}$ to the edge pointing from $i$ to $j$ gives the
contribution of the permutation $\pi$ to the determinant of $X_5$:
$$
\weight\of\pi = \pas{-1}^4\cdot x_{1,2}\cdot x_{2,5}\cdot x_{3,1}\cdot x_{4,4}\cdot x_{5,3}.
$$
}
\label{fig:permmatch}
\input graphics/permmatch
\end{figure}

Given this view, the combinatorial interpretation of the $s$-th power of the determinant $\det\of X$ is
obvious: It is the generating function of all $s$-tuples $m=\pas{m_{\pi_1},\dots,m_{\pi_s}}$
of permutation diagrams, where the (signed) weight of such $s$-tuple $m$ is given as
$$
\weight\of m=\prod_{i=1}^s\pas{-1}^{\numof{\text{intersections in }m_{\pi_i}}}\cdot\weight\of{m_{\pi_i}}.
$$
(See \figref{fig:det2s} for an illustration.)
\begin{figure}
\caption{For $n=5$, the picture shows a typical object of weight
$$
%\scriptstyle
\!\!\!\!\!\!\!\!\!\!\!\!\!\!\!\!\!\!\!\!\!
-
x_{1,1}%\cdot 
x_{1,2}^2%\cdot 
x_{1,4}%\cdot
x_{2,3}%\cdot 
x_{2,4}%\cdot 
x_{2,5}^2%\cdot
x_{3,1}%\cdot 
x_{3,3}%\cdot 
x_{3,4}%\cdot 
x_{3,5}%\cdot
x_{4,1}%\cdot 
x_{4,3}%\cdot 
x_{4,4}%\cdot 
x_{4,5}%\cdot
x_{5,1}%\cdot 
x_{5,2}^2%\cdot 
x_{5,3},
$$
which is counted by the generating function $\det\of{X}^4$.
(The edge connecting lower vertex $3$ to upper vertex $3$ in the $4$--th (right--most) matching
is drawn as zigzag-line, just to avoid
intersections of more than two edges in a single point.)
}
\label{fig:det2s}
\input graphics/det2s
\end{figure}

\subsection{Action of the determinant of partial derivatives}
Next we need to describe combinatorially the \EM{action} of the determinant $\det\of{\matminor{\partial}{\range{k}}{\range{k}}}$
of partial derivatives. Let $m=\pas{m_{\pi_1},\dots,m_{\pi_s}}$ be an $s$-tuple of permutation diagrams
counted in the generating function $\pas{\det\of X}^s$, and let $\tau\in\symm_k$:
Then the summand
$$
\partial_\tau\defeq\sgn\of\tau\cdot\prod_{i=1}^k\frac{\partial}{\partial x_{i,\tau\of i}}
$$
applied to $\weight\of m$ yields
$$
\sgn\of\tau\cdot\pas{\prod_{i=1}^k\frac{\partial}{\partial x_{i,\tau\of i}}}\weight\of m =
\sgn\of\tau\cdot c_{\tau,m}\cdot \frac{\weight\of m}{\prod_{i=1}^k{x_{i,\tau\of i}}},
$$
where $c_{\tau,m}$ is the number of ways to \EM{choose} the set of $k$ edges
$\setof{\pas{i\to\tau\of i}:i\in\range{k}}$ from \EM{all} the edges in $m$ (this number, of course,
might be zero). We may visualize the action of $\delta_\tau$ as ``erasing the edges constituting $\tau$
in $m$''; see \figref{fig:derivdet2s} for an illustration.

\begin{figure}
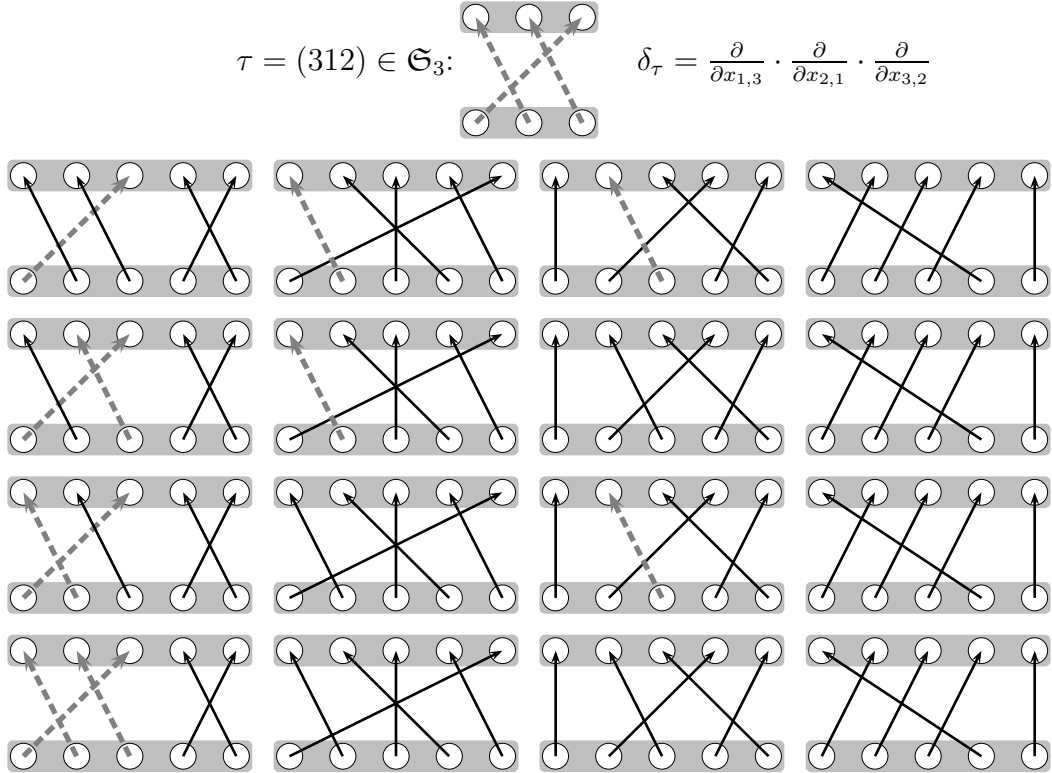

\caption{Let $n=5$, $s=4$ and $k=3$ in Corollary~\ref{cor:vivanti}. The picture shows the four possible ways of ``erasing'' the edges constituting
$\tau\in\symm_3$ from the $4$-tuple $\pas{m_{\pi_1},m_{\pi_2},m_{\pi_3},m_{\pi_4}}$ of
matchings, where $\pas{\pi_1,\pi_2,\pi_3,\pi_4}\in\symm_5^4$ is
$\pas{\pas{3 1 2 5 4},\pas{5 1 3 2 4},\pas{1 4 2 5 3},\pas{2 3 4 1 5}}$. The erased edges are shown as
grey dashed lines.
}
\label{fig:derivdet2s}
% !TEX root = ../paper.tex
\psset{unit=0.7cm}
\pspicture(-0.5,-3.5)(20.5,12)
% \showgrid
% Text
\rput[Bl](5,10){$\tau=\pas{3 1 2}\in\symm_3$:}
\rput[Bl](12.53,10){$\delta_\tau=
\frac{\partial}{\partial x_{1,3}}\cdot\frac{\partial}{\partial x_{2,1}}\cdot\frac{\partial}{\partial x_{3,2}}$}
% The domains+images
\psset{linecolor=lightgray,framearc=0.3,fillstyle=solid,fillcolor=lightgray}
\psframe(9.2,8.7)(11.8,9.3)
\psframe(9.2,10.7)(11.8,11.3)
\multido{\nz=0+5}{4}{
	\rput(\nz,-3){\psframe(0.7,-0.3)(5.3,0.3)}
	\rput(\nz,-1){\psframe(0.7,-0.3)(5.3,0.3)}
}\multido{\nz=0+5}{4}{
	\rput(\nz,0){\psframe(0.7,-0.3)(5.3,0.3)}
	\rput(\nz,2){\psframe(0.7,-0.3)(5.3,0.3)}
}
\multido{\nz=0+5}{4}{
	\rput(\nz,3){\psframe(0.7,-0.3)(5.3,0.3)}
	\rput(\nz,5){\psframe(0.7,-0.3)(5.3,0.3)}
}
\multido{\nz=0+5}{4}{
	\rput(\nz,6){\psframe(0.7,-0.3)(5.3,0.3)}
	\rput(\nz,8){\psframe(0.7,-0.3)(5.3,0.3)}
}
\psset{linecolor=black,linewidth=0.01,fillstyle=solid,fillcolor=white}
\multido{\nx=9+1}{3}{
	\rput(\nx,0){\pscircle(0.5,9){0.25}}
	{\rput(\nx,0){\pscircle(0.5,11){0.25}}}
}
\rput(0,-3){\multido{\nz=0+5}{4}{
	\rput(\nz,0){
		\multido{\nx=1+1}{5}{\pscircle(\nx,0){0.25}}% \rput(\nx,-0.5){$\scriptstyle \nx$}}
		\multido{\nx=1+1}{5}{\pscircle(\nx,2){0.25}}% \rput(\nx,2.5){$\scriptstyle \nx$}}
	}
}
}
\rput(0,0){\multido{\nz=0+5}{4}{
	\rput(\nz,0){
		\multido{\nx=1+1}{5}{\pscircle(\nx,0){0.25}}% \rput(\nx,-0.5){$\scriptstyle \nx$}}
		\multido{\nx=1+1}{5}{\pscircle(\nx,2){0.25}}% \rput(\nx,2.5){$\scriptstyle \nx$}}
	}
}
}
\rput(0,3){\multido{\nz=0+5}{4}{
	\rput(\nz,0){
		\multido{\nx=1+1}{5}{\pscircle(\nx,0){0.25}}% \rput(\nx,-0.5){$\scriptstyle \nx$}}
		\multido{\nx=1+1}{5}{\pscircle(\nx,2){0.25}}% \rput(\nx,2.5){$\scriptstyle \nx$}}
	}
}
}
\rput(0,6){\multido{\nz=0+5}{4}{
	\rput(\nz,0){
		\multido{\nx=1+1}{5}{\pscircle(\nx,0){0.25}}% \rput(\nx,-0.5){$\scriptstyle \nx$}}
		\multido{\nx=1+1}{5}{\pscircle(\nx,2){0.25}}% \rput(\nx,2.5){$\scriptstyle \nx$}}
	}
}
}% The ``mapping arrows''
\psset{linewidth=0.1,linecolor=gray,fillstyle=none,linestyle=dashed,dash=4pt 2pt}
\psline{->}(9.5,9)(11.5,11)
\psline{->}(10.5,9)(9.5,11)
\psline{->}(11.5,9)(10.5,11)
\psset{linewidth=0.05,linecolor=black,fillstyle=none,linestyle=solid}
\rput(0,-3){
% 1
\psline[linewidth=0.1,linecolor=gray,fillstyle=none,linestyle=dashed,dash=4pt 2pt]{->}(1,0)(3,2) % [linestyle=dotted]
\psline[linewidth=0.1,linecolor=gray,fillstyle=none,linestyle=dashed,dash=4pt 2pt]{->}(2,0)(1,2)
\psline[linewidth=0.1,linecolor=gray,fillstyle=none,linestyle=dashed,dash=4pt 2pt]{->}(3,0)(2,2)
\psline{->}(4,0)(5,2)
\psline{->}(5,0)(4,2)
% 2
\psline{->}(6,0)(10,2)
\psline{->}(7,0)(6,2)
\psline{->}(8,0)(8,2)
\psline{->}(9,0)(7,2)
\psline{->}(10,0)(9,2) % [linestyle=dotted]
% 3
\psline{->}(11,0)(11,2)
\psline{->}(12,0)(14,2)
\psline{->}(13,0)(12,2)
\psline{->}(14,0)(15,2)
\psline{->}(15,0)(13,2) % [linestyle=dotted]
% 4
\psline{->}(16,0)(17,2)
\psline{->}(17,0)(18,2)
\psline{->}(18,0)(19,2)
\psline{->}(19,0)(16,2)
\psline{->}(20,0)(20,2) % [linestyle=dotted]
}
\rput(0,0){
% 1
\psline[linewidth=0.1,linecolor=gray,fillstyle=none,linestyle=dashed,dash=4pt 2pt]{->}(1,0)(3,2) % [linestyle=dotted]
\psline[linewidth=0.1,linecolor=gray,fillstyle=none,linestyle=dashed,dash=4pt 2pt]{->}(2,0)(1,2)
\psline{->}(3,0)(2,2)
\psline{->}(4,0)(5,2)
\psline{->}(5,0)(4,2)
% 2
\psline{->}(6,0)(10,2)
\psline{->}(7,0)(6,2)
\psline{->}(8,0)(8,2)
\psline{->}(9,0)(7,2)
\psline{->}(10,0)(9,2) % [linestyle=dotted]
% 3
\psline{->}(11,0)(11,2)
\psline{->}(12,0)(14,2)
\psline[linewidth=0.1,linecolor=gray,fillstyle=none,linestyle=dashed,dash=4pt 2pt]{->}(13,0)(12,2)
\psline{->}(14,0)(15,2)
\psline{->}(15,0)(13,2) % [linestyle=dotted]
% 4
\psline{->}(16,0)(17,2)
\psline{->}(17,0)(18,2)
\psline{->}(18,0)(19,2)
\psline{->}(19,0)(16,2)
\psline{->}(20,0)(20,2) % [linestyle=dotted]
}
\rput(0,3){
% 1
\psline[linewidth=0.1,linecolor=gray,fillstyle=none,linestyle=dashed,dash=4pt 2pt]{->}(1,0)(3,2) % [linestyle=dotted]
\psline{->}(2,0)(1,2)
\psline[linewidth=0.1,linecolor=gray,fillstyle=none,linestyle=dashed,dash=4pt 2pt]{->}(3,0)(2,2)
\psline{->}(4,0)(5,2)
\psline{->}(5,0)(4,2)
% 2
\psline{->}(6,0)(10,2)
\psline[linewidth=0.1,linecolor=gray,fillstyle=none,linestyle=dashed,dash=4pt 2pt]{->}(7,0)(6,2)
\psline{->}(8,0)(8,2)
\psline{->}(9,0)(7,2)
\psline{->}(10,0)(9,2) % [linestyle=dotted]
% 3
\psline{->}(11,0)(11,2)
\psline{->}(12,0)(14,2)
\psline{->}(13,0)(12,2)
\psline{->}(14,0)(15,2)
\psline{->}(15,0)(13,2) % [linestyle=dotted]
% 4
\psline{->}(16,0)(17,2)
\psline{->}(17,0)(18,2)
\psline{->}(18,0)(19,2)
\psline{->}(19,0)(16,2)
\psline{->}(20,0)(20,2) % [linestyle=dotted]
}
\rput(0,6){
% 1
\psline[linewidth=0.1,linecolor=gray,fillstyle=none,linestyle=dashed,dash=4pt 2pt]{->}(1,0)(3,2) % [linestyle=dotted]
\psline{->}(2,0)(1,2)
\psline{->}(3,0)(2,2)
\psline{->}(4,0)(5,2)
\psline{->}(5,0)(4,2)
% 2
\psline{->}(6,0)(10,2)
\psline[linewidth=0.1,linecolor=gray,fillstyle=none,linestyle=dashed,dash=4pt 2pt]{->}(7,0)(6,2)
\psline{->}(8,0)(8,2)
\psline{->}(9,0)(7,2)
\psline{->}(10,0)(9,2) % [linestyle=dotted]
% 3
\psline{->}(11,0)(11,2)
\psline{->}(12,0)(14,2)
\psline[linewidth=0.1,linecolor=gray,fillstyle=none,linestyle=dashed,dash=4pt 2pt]{->}(13,0)(12,2)
\psline{->}(14,0)(15,2)
\psline{->}(15,0)(13,2) % [linestyle=dotted]
% 4
\psline{->}(16,0)(17,2)
\psline{->}(17,0)(18,2)
\psline{->}(18,0)(19,2)
\psline{->}(19,0)(16,2)
\psline{->}(20,0)(20,2) % [linestyle=dotted]
}
% End of picture
\endpspicture
 
\end{figure}

Hence we have:
\begin{equation}
\label{eq:partialaction}
\det\of{\matminor{\partial}{\range{k}}{\range{k}}}\pas{\det\of X}^s
=
\sum_{m\in\symm_n^s}\!\!\weight\of m
\sum_{\tau\in\symm_k}c_{\tau,m}\cdot\frac{\sgn\of\tau }{\prod_{i=1}^k{x_{i,\tau\of i}}}.
\end{equation}
\subsection{Double counting} For our purposes, it is convenient to interchange
the summation in \eqref{eq:partialaction}. This application of \EM{double counting} amounts
here to a simple change of view: Instead of counting the ways to \EM{choose} the set of edges corresponding
to $\tau$ from all the edges corresponding to some \EM{fixed} $s$-tuple $m$, we
fix $\tau$ and consider the set of $m$'s from which $\tau$' edges might be chosen. 
This will involve two considerations:
\bit
\item In how many ways can the edges corresponding to $\tau$ be \EM{distributed} on
	$s$ copies of the bipartite graph $K_{n,n}$?
\item For each such distribution, what is the set of compatible $s$-tuples of permutation diagrams?
\eit
For example, if $k=3$ and $s=4$ (as in \figref{fig:derivdet2s}), there clearly
\bit
\item is $1$ way to distribute the three edges on a \EM{single} copy of the $4$ bipartite
	graphs (see the fourth row of pictures in \figref{fig:derivdet2s}), and there are $4$ ways to
	choose such single copy,
\item are $3$ ways to distribute the three edges on \EM{precisely two} copies of the $4$ bipartite
	graphs (see the second and third row of pictures in \figref{fig:derivdet2s}), and there are $4\cdot 3$
	ways to choose such pair of copies (whose order is relevant),
\item is $1$ way to distribute the three edges on \EM{precisely three} copies of the $4$ bipartite
	graphs (see the first row of pictures in \figref{fig:derivdet2s}), and there are $4\cdot 3 \cdot 2$
	ways to choose such triple of copies (whose order is relevant).
\eit

\subsection{Partitioned permutations}
A distribution of the edges corresponding to $\tau\in\symm_k$ on $s$ copies of the
bipartite graph $K_{n,n}$ may be viewed (see \figref{fig:derivdet2s})
\bit
\item as an $s$-tuple of \EM{partial matchings} (some of
which may be empty) of $K_{k,k}$
\item such that the union of these $s$
partial matchings gives the \EM{perfect matching} $m_\tau$ of  $K_{k,k}$.
\eit
Clearly, to each of such partial matching corresponds a \EM{partial permutation} $\tau_i$, which
we may write in two-line notation as follows:
\bit
\item the lower line shows the \EM{domain} of $\tau_i$ in its natural order,
\item the upper line shows the \EM{image} of $\tau_i$,
\item the \EM{ordering} of the upper line represents the permutation $\tau_i$.
\eit
We say that each of these $\tau_i$ is a \EM{partial permutation} of $\tau$, and that
$\tau$ is a \EM{partitioned permutation}. We write in short:
$$
\tau =\tau_1\concat\tau_2\concat\dots\concat\tau_s.
$$
%permutation in the lower line and its \EM{image} in the upper line; the ordering of the upper
%line representing the permutation
%.
For example, the rows of pictures  in \figref{fig:derivdet2s} correspond to the 
partitioned permutations
% $4$-tuples of partial permutations 
(written in the aforementioned two-line notation)
% to $4$-tuples of partial permutations:
\bit
\item ${\binom{3}{1}\concat\binom{1}{2}\concat\binom{2}{3}\concat\binom{}{}}$ for the first row,
\item ${\binom{3\, 2}{1\, 3}\concat\binom{1}{2}\concat\binom{}{}\concat\binom{}{}}$ for the second row,
\item ${\binom{3\, 1}{1\, 2}\concat\binom{}{}\concat\binom{2}{3}\concat\binom{}{}}$ for the third row,
\item ${\binom{3\, 1\, 2}{1\, 2\, 3}\concat\binom{}{}\concat\binom{}{}\concat\binom{}{}}$ for the fourth row.
\eit

\subsection{Equivalence relation for partitioned permutations}
\label{sec:equivalence}
For any partitioned permutation $\tau=\tau_1\concat\tau_2\concat\dots\concat\tau_s$,
consider the $s$-tuple of the \EM{upper rows} (in the aforementioned two-line notation) \EM{only}:
We call this $s$-tuple of \EM{permutation words} the \EM{partition scheme} of $\tau$ and denote it
by $\scheme{\tau}$. We say that $\tau=\tau_1\concat\tau_2\concat\dots\concat\tau_s$
\EM{complies} to its partition scheme $\scheme{\tau}=\scheme{\tau_1\concat\tau_2\concat\dots\concat\tau_s}$
and denote this by ${\tau}\complies{\scheme{\tau_1\concat\tau_2\concat\dots\concat\tau_s}}$.

Now consider the following equivalence relation on the set of partitioned permutations:
$$
\mu={\mu_1\concat\dots\concat\mu_s}\sim\nu={\nu_1\concat\dots\concat\nu_s}:\iff\scheme{\mu}=\scheme{\nu}.
$$
%i.e., $\image\of{\mu_i}=\image\of{\nu_i}$ for $i=1,\dots, s$. We call such $s$-tuple of upper rows
%(in two-line notation) partitioned permutation, where
%the bottom row of the two-line notation is 
By definition, the corresponding equivalence classes are \EM{indexed} by a partition scheme,
and $\mu=\mu_1\concat\mu_2\concat\dots\concat\mu_s$ belongs to the equivalence class
of  $\tau=\tau_1\concat\tau_2\concat\dots\concat\tau_s$ iff $\mu\complies\scheme{\tau}$.
(For $s>1$, a partitioned permutation $\tau$ is \EM{not}
uniquely determined by $\scheme{\tau}$.)

It is straightforward to compute the \EM{number} of these equivalence classes: In the language of
\EM{combinatorial species} (see, for instance, \cite{Bergeron-Labelle-Leroux:1998}) the $s$-tuples
of permutation
words indexing these
classes correspond bijectively to the (labelled) species $\pas{\speciesname{Permutations}}^s$, and since the
exponential generating function of $\speciesname{Permutations}$ is
$$
\sum_{n=0}^\infty n!\cdot\frac{z^n}{n!}= \frac1{1-z},
$$
the
exponential generating function of $\pas{\speciesname{Permutations}}^s$ is simply
$$
\pas{\frac1{1-z}}^s = \pas{1-z}^{-s}=\sum_{k=0}^\infty\binom{-s}{k}\pas{-1}^k z^k=
\sum_{k=0}^\infty s\cdot\pas{s+1}\cdots\pas{s+k-1} \frac{z^k}{k!}.
$$
So the number of these equivalence classes is $s\cdot\pas{s+1}\cdots\pas{s+k-1}$, which
is precisely the factor in \eqref{eq:vivanti}. Our proof will be complete if we manage
to show that the generating
functions of \EM{each} of these equivalence classes are the \EM{same}, namely
$$ \pas{\det\of X}^{s-1}\cdot\det\of{\matcominor{X}{I}{J}}.$$

\subsection{Accounting for the signs}
A necessary first step for this task is to investigate how the sign of a
permutation $\pi$ is changed by \EM{removing} a given partial permutation $\pi^\prime$:
We view this as \EM{erasing} all the edges belonging to $\pi^\prime$'s permutation diagram
$m_{\pi^\prime}$ from $\pi$'s permutation diagram $m_\pi$; see again \figref{fig:derivdet2s}.

\begin{lem}
\label{lem:remove-edge}
Let $\pi\in\symm_n$ be a permutation, and let $\pi^\star$ be the permutation
corresponding to the permutation diagram $m_\pi$ with edge $\pas{i,\pi\of i}$ removed. Then we have
$$
\sgn\of\pi = \pas{-1}^{\pi\of i -i}\cdot\sgn\of{\pi^\star}.
$$
\end{lem}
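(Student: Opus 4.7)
The plan is to translate the claim into a simple crossing count. Since the sign of any permutation equals $\pas{-1}$ raised to the number of intersections in its permutation diagram, and since removing the edge $e:=\pas{i\to\pi\of i}$ destroys precisely those intersections in which $e$ participates (leaving the remaining intersections, which by definition are those counted by $\sgn\of{\pi^\star}$, untouched), proving the lemma reduces to showing that the number of edges of $m_\pi$ that cross $e$ has the same parity as $\pi\of i - i$.

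First I would identify which edges of $m_\pi$ cross $e$: the edge $\pas{j\to\pi\of j}$ crosses $e$ exactly when either $j<i$ and $\pi\of j>\pi\of i$, or $j>i$ and $\pi\of j<\pi\of i$. Writing $A$ and $B$ for the cardinalities of these two sets of $j$'s respectively, the total number of crossings of $e$ equals $A+B$.

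Next I would eliminate $A$ by a one-line bookkeeping identity. Since $\pi$ is a bijection, exactly $\pi\of i -1$ indices $j\neq i$ satisfy $\pi\of j<\pi\of i$. Splitting this total according to whether $j<i$ or $j>i$ yields $\pas{i-1-A}+B=\pi\of i -1$, hence $B-A=\pi\of i - i$. Consequently $A+B=2A+\pas{\pi\of i - i}$, which has the same parity as $\pi\of i - i$, and the lemma follows at once.

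I do not expect any real obstacle: once the permutation-diagram interpretation of $\sgn$ is in place, this is purely a parity computation. The only mild subtlety worth a parenthetical remark is that after erasing $e$ the vertex $i$ on the bottom and the vertex $\pi\of i$ on the top become isolated, so $\pi^\star$ should be understood as the induced bijection between $\range{n}\without\setof{i}$ and $\range{n}\without\setof{\pi\of i}$, with its sign defined (compatibly) by the number of intersections in the diagram $m_\pi$ with $e$ removed.
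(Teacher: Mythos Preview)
Your proof is correct and follows essentially the same route as the paper's: both arguments count the crossings of the removed edge by splitting them into the two types (indices $j<i$ with $\pi\of j>\pi\of i$ versus $j>i$ with $\pi\of j<\pi\of i$) and then use that $\pi$ is a bijection to show the total has the form $2k+\pi\of i-i$. The only cosmetic difference is bookkeeping---the paper partitions the domain and image into four sets $A,B,C,D$ and counts via $\pi^{-1}\of C$, while you count the indices $j$ with $\pi\of j<\pi\of i$ directly---but the underlying parity computation is identical.
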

\begin{proof}
We count the % (parity of) %the change in 
the number of intersections % which is caused by removing 
with edge $\pas{i,\pi\of i}$ in $m_\pi$: Let $A=\range{i-1}$, $B=\range{n}\setminus\range{i}$,
$C=\range{n}\setminus\range{\pi\of i}$ and $D=\range{\pi\of i-1}$ (see \figref{fig:remove-edge} for
an illustration).

Assume $\card{\pi^{-1}\of C\cap A}=k$: Then edge $\pas{i,\pi\of i}$
clearly intersects the $k$ edges joining vertices from $A$ to vertices from $C$
(see again \figref{fig:remove-edge}).

The only other intersections with $\pas{i,\pi\of i}$
come from edges joining vertices from $B$ to vertices from $D$: Since $\pi$ is a bijection,
we have $\card{\pi^{-1}\of C\cap B} = n-\pi\of i - k$, whence
$\card{\pi^{-1}\of D\cap B}=\card{B\setminus\pi^{-1}\of C} = n-i-\pas{n-\pi\of i - k} = k+\pi\of i - i$.

Altogether, the removal of edge $\pas{i,\pi\of i}$ removes $2k+\pi\of i - i$ intersections of edges.
% This finishes the proof
\end{proof}
\begin{figure}
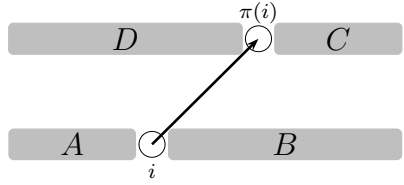

\caption{Erase a single edge $\pas{i,\pi\of i}$ in the permutation diagram $m_\pi$ of
some permutation $\pi$: Note that precisely the intersections with edges
%\bit
%\item 
leading from $A\subseteq\domain\of\pi$ to $C\subseteq\image\of\pi$
%\item 
and with edges leading from $B\subseteq\domain\of\pi$ to $D\subseteq\image\of\pi$
%\eit
are removed by this operation.
}
\label{fig:remove-edge}
\input graphics/remove-edge
\end{figure}

\begin{cor}
\label{cor:remove-edges}
Let $\pi\in\symm_n$ be a partitioned permutation $\pi=\pi_1\concat\pi_2$, where $\pi_1$ is the partial permutation
$$
\pi_1 =
\begin{pmatrix}\pi\of{i_1} & \pi\of{i_2} & \cdots & \pi\of{i_k} \\ {i_1} & {i_2} & \cdots & {i_k}\end{pmatrix}
$$
(with $\setof{i_1\leq i_2\leq\cdots\leq i_k}\subseteq\range{n}$). Clearly,
$\pi_2$ is the permutation
corresponding to the matching $m_\pi$ with edges $\pas{i_1,\pi\of{i_1}},\pas{i_2,\pi\of{i_2}},\dots,\pas{i_k,\pi\of{i_k}}$ erased, which we also denote by $\pi\setminus\pi_1$. Then we have
$$
\sgn\of\pi = \pas{-1}^{\sum_{j=1}^k\pi\of{i_k} -i_k}\cdot\sgn\of{\pi_1}\cdot\sgn\of{\pi_2}.
$$
If we denote $I=\setof{i_1,\dots,i_k}$ and $J=\setof{\pi\of{i_1}, \dots, \pi\of{i_k}}$, we may
rewrite this as
\begin{equation}
\label{eq:remove-edges}
\sgn\of\pi = \signsumset{I}{\range n}\cdot\signsumset{J}{\range n}\cdot\sgn\of{\pi_1}\cdot\sgn\of{\pi\setminus\pi_1}.
\end{equation}

\end{cor}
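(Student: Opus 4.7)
The plan is to split the intersections of the full permutation diagram $m_\pi$ according to the partition $\pi=\pi_1\concat\pi_2$, and then to apply (the proof of) Lemma~\ref{lem:remove-edge} once for each of the $k$ edges of $\pi_1$.

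First, every pair of edges in $m_\pi$ either lies entirely in $m_{\pi_1}$, entirely in $m_{\pi\setminus\pi_1}$, or is \emph{mixed} (one edge from each); hence
$$
\numof{\text{intersections in }m_\pi}
=\numof{\text{intersections in }m_{\pi_1}}
+\numof{\text{intersections in }m_{\pi\setminus\pi_1}}+c,
$$
where $c$ counts the mixed intersections. By the permutation-diagram interpretation of inversions, the first two summands are precisely the numbers of inversions of $\pi_1$ (viewed as the matching between $I$ and $J$ with their inherited orderings) and of $\pi\setminus\pi_1$, and therefore
$$\sgn\of\pi = \sgn\of{\pi_1}\cdot\sgn\of{\pi\setminus\pi_1}\cdot\pas{-1}^c.$$
It thus suffices to show $\pas{-1}^c=\signsumset{I}{\range n}\cdot\signsumset{J}{\range n}$.

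To this end, apply the argument from the proof of Lemma~\ref{lem:remove-edge} to each single edge $\pas{i_j,\pi\of{i_j}}$ of $\pi_1$ in isolation: the number of edges of $m_\pi$ that cross $\pas{i_j,\pi\of{i_j}}$ is of the form $2k_j + \pi\of{i_j} - i_j$, and in particular has the same parity as $i_j+\pi\of{i_j}$. Summing these individual counts over $j=1,\dots,k$ counts every intersection within $m_{\pi_1}$ exactly twice and every mixed intersection exactly once; hence the sum equals $2\cdot\numof{\text{intersections in }m_{\pi_1}}+c$. Reducing modulo $2$,
$$c\equiv\sum_{j=1}^k\pas{i_j+\pi\of{i_j}}\pmod 2,$$
so that $\pas{-1}^c=\pas{-1}^{\sum_j i_j}\cdot\pas{-1}^{\sum_j\pi\of{i_j}}=\signsumset{I}{\range n}\cdot\signsumset{J}{\range n}$, and substituting back yields \eqref{eq:remove-edges}.

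The only delicate point in this plan is that it uses the \emph{intermediate combinatorial count} from the proof of Lemma~\ref{lem:remove-edge} (the parity of the number of crossings of a \emph{fixed} edge), not merely the lemma's final sign statement; however, this extra information is read off directly from the $A,B,C,D$-bookkeeping in that proof. An alternative route would be to iterate Lemma~\ref{lem:remove-edge} by peeling off the edges of $\pi_1$ one at a time, but that forces one to track a re-indexing of the shrinking diagram at every step, which I find less transparent than the direct double-counting above.
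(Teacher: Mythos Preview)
Your proof is correct and takes a genuinely different route from the paper. The paper proceeds by induction on $k$: it peels off the edge of $\pi_1$ whose image is maximal, invokes Lemma~\ref{lem:remove-edge} for that single edge, and then applies the induction hypothesis to the remaining $\pi_1'$ inside the shrunken permutation $\pi'\in\symm_{n-1}$; the bookkeeping hinges on the observation that the edge with maximal image contributes exactly $l$ crossings \emph{within} $m_{\pi_1}$ (where $l$ is the number of domain elements of $\pi_1$ to its right), which gives $\sgn\of{\pi_1}=\pas{-1}^l\sgn\of{\pi_1'}$. In other words, the paper follows precisely the ``alternative route'' you sketch and dismiss in your final paragraph. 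Your double-counting argument is cleaner: by summing the per-edge crossing counts from the proof of Lemma~\ref{lem:remove-edge} over all edges of $\pi_1$ and noting that intra-$\pi_1$ crossings are counted twice while mixed crossings are counted once, you extract the parity of $c$ directly, with no induction and no re-indexing. The trade-off, as you note, is that you rely on the intermediate count $2k_j+\pi\of{i_j}-i_j$ from inside the proof of Lemma~\ref{lem:remove-edge} rather than just its stated conclusion; the paper's inductive approach uses only the lemma's final sign statement, at the cost of the re-labelling you wished to avoid.
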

\begin{proof}
We proceed by induction: $k=1$ simply amounts to the statement of Lemma~\ref{lem:remove-edge}.

For $k>1$, let $i_{\text{max}}\defeq\pi^{-1}\of{\max\of{\image\pi_1}}$ be the pre-image of
the maximum of the image of $\pi_1$.
% $$ \pi\of{i_{\text{max}}} = \max\of{\pi_{i_1},\pi_{i_2},\dots,\pi_{i_k}}.$$
Let $l$ be the number of elements in the domain of $\pi_1$ wich are greater than $i_{\text{max}}$:
$$
l=\card{\setof{i:i\in\domain\pi_1 \and i>i_{\text{max}}}}.
$$
See \figref{fig:remove-edges} for an illustration. Removing the edge
$\pas{i_{\text{max}},\pi\of{i_{\text{max}}}}$ leaves (the diagram of) a permutation $\pi^\prime\in\symm_{n-1}$
and a partial permutation
$$
\pi_1^\prime =
\begin{pmatrix}
\pi\of{i_1} & \cdots & \pi\of{i_{\text{max}}-1} & \pi\of{i_{\text{max}}+1} & \cdots  & \pi\of{i_{k}-1} \\
{i_1} & \cdots & i_{\text{max}}-1 & \pas{i_{\text{max}}+1}-1 & \cdots & {i_k-1}
\end{pmatrix}
$$
therein of length $k-1$. By induction, we have
$$
\sgn\of{\pi^\prime} =
\pas{-1}^{\pas{\sum_{j=1}^k\pi\of{i_k} -i_k} - \pas{\pi\of{i_\text{max}}-i_\text{max}} - l}
\cdot\sgn\of{\pi^\prime\setminus\pi_1^\prime}\cdot\sgn\of{\pi_1^\prime}.
$$
Since we have
\bit
\item $\pi\setminus\pi_1 = \pi^\prime\setminus\pi_1^\prime\implies
\sgn\of{\pi_2}=\sgn\of{\pi\setminus\pi_1} = \sgn\of{\pi^\prime\setminus\pi_1^\prime}$,
\item $\sgn\of{\pi_1}=\pas{-1}^l\cdot\sgn\of{\pi_1^\prime}$ (see again \figref{fig:remove-edges}),
\item and $\sgn\of\pi=\pas{-1}^{\pi\of{i_\text{max}}-i_\text{max}}\sgn\of{\pi^\prime}$ (by
Lemma~\ref{lem:remove-edge}),
\eit
the assertion follows.
\end{proof}
\begin{figure}
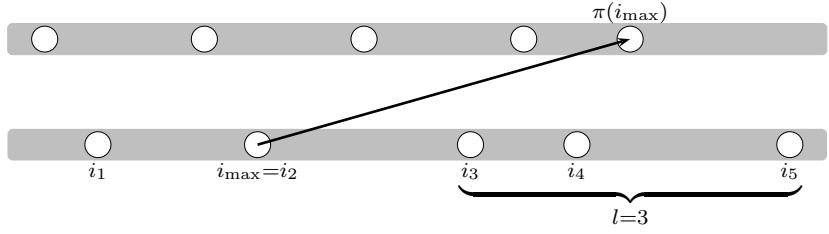

\caption{Erase several edges, corresponding to some partial permutation $\pi_1$  of $\pi$: Start with
the edge incident with the rightmost vertex from $\image\of\pi_1$ in $m_\pi$.
}
\label{fig:remove-edges}
\input graphics/remove-edges
\end{figure}

\subsection{Sums of (signed) products of minors}
Now consider a fixed equivalence class in the sense of \secref{sec:equivalence}, which is
indexed by a \EM{partition-scheme}
$$
\scheme{\tau_1\concat\tau_2\concat\cdots\concat\tau_s}.
$$
We want to compute the generating function $G_{\scheme{\tau}}$ of this equivalence class: Clearly, we may
concentrate on the \EM{nonempty} partial permutations; so w.l.o.g. we have to consider
the partition-scheme
$$
\scheme{\tau_1\concat\tau_2\concat\cdots\concat\tau_m}
$$
which consists only of \EM{nonempty} partial permutations $\tau_j$ for $1\leq j\leq m\leq s$.
For any $\sigma\in\symm_k$ with
$\sigma\complies\scheme{\tau_1\concat\tau_2\concat\cdots\concat\tau_m}$, such
partition scheme corresponds to a unique \EM{ordered partition} of the image of $\sigma$:
% with $\sigma\complies\scheme{\tau_1\concat\tau_2\concat\cdots\concat\tau_m}$:
$$
\image\sigma=\range{k}=
\pas{\image\tau_1}\disjuni\pas{\image\tau_2}\disjuni\cdots\disjuni\pas{\image\tau_m}=J_1\disjuni J_2\disjuni\cdots\disjuni J_m,
$$
and any specification of a \EM{compatible ordered partition} $\compat{I}{J}=\pas{I_1,I_2,\dots,I_m}$, i.e.,
$$
\range{k}=I_1\disjuni I_2\disjuni\cdots\disjuni I_m\text{ where }\card{I_l}=\card{J_l},l=1,\dots,m,
$$
uniquely determines such $\sigma$, which we denote by $\sigma\of{\compat{I}{J},\scheme{\tau}}$.

Equation~\eqref{eq:remove-edges} gives the sign-change
caused by erasing the edges corresponding to $\tau_j$ (with respect to \EM{any} permutation in $\symm_n$
which contains $\tau_i$ as a partial permutation), whence we can write 
the generating function as
\begin{multline*}
G_{\scheme{\tau}} = \det\of X^{s-m} \\
\times
\sum_{\compat{I}{J}}
\sgn\of{\sigma\of{\compat{I}{J},\scheme{\tau}}}\cdot
\prod_{l=1}^m\pas{\sgn\of{\tau_l}\cdot
\signsumset{I_l}{\range{n}}\cdot\signsumset{J_l}{\range{n}}\cdot\det\of{\matcominor{X}{I_l}{J_l}}},
\end{multline*}
where the sum is over all compatible partitions $\compat{I}{J}$.
%of $\range{k}$ and $\sigma$ is the
%corresponding unique permutation. 
(The factor $\sgn\of{\sigma\of{\compat{I}{J},\scheme{\tau}}}$ comes from the \EM{determinant}
of partial derivatives.)
Clearly,
$$
\prod_{l=1}^m\signsumset{I_l}{\range{n}}=\prod_{l=1}^m\signsumset{J_l}{\range{n}}=1,
$$
so it remains to show
\begin{equation}
\label{eq:showthis}
\sum_{\compat{I}{J}}
\sgn{\sigma\of{\compat{I}{J},\scheme{\tau}}}\cdot
\prod_{l=1}^m\pas{\sgn\of{\tau_l}\cdot\det\of{\matcominor{X}{I_l}{J_l}}}
=
\det\of{X}^{m-1}\det\of{\matcominor{X}{\range k}{\range k}}.
\end{equation}
This, of course, is true for $m=1$. We proceed by induction on $m$.

For any ordered partition $S_1\disjuni S_2\disjuni\dots\disjuni S_m = \range{k}$,
we introduce the shorthand notation
$$\complementoffirst{S}{l}\defeq\range{k}\setminus\pas{S_1\disjuni S_2\disjuni\dots\disjuni S_l}.$$
%Set ${\mathbf I}_l\defeq\range{k}\setminus\bigcup_{j=1}^l I_j$ and ${\mathbf J}_l\defeq\range{k}\setminus\bigcup_{j=1}^l J_j$.
Moreover, write $d_{I_j}\defeq\det\of{\matcominor{X}{I_j}{J_j}}$ for short. Then the lefthand-side
of \eqref{eq:showthis}
may be written as the $\pas{m-1}$-fold sum
\begin{equation}
\label{eq:multisum}
\sum_{\substack{I_1\subseteq\complementoffirst{I}{0}\\\card{I_1}=\card{J_1}}}\sgn\of{\tau_1}d_{I_1}
\!\!
\sum_{\substack{I_2\subseteq\complementoffirst{I}{1}\\\card{I_1}=\card{J_1}}}\sgn\of{\tau_2}d_{I_2}
\cdots
\!\!\!\!\!\!
\sum_{\substack{I_{m-1}\subseteq\complementoffirst{I}{m-2}\\\card{I_{m-2}}=\card{J_{m-2}}}}
\!\!\!\!\!\!
\sgn\of{\tau_{m-1}}d_{I_{m-1}}
\sgn\of{\tau_m}d_{I_m}\cdot\,\sgn\of{\sigma},
\end{equation}
where $I_m=\complementoffirst{I}{m-2}\setminus I_{m-1}$ and $\sigma=\sigma\of{\compat{I}{J},\scheme{\tau}}$.

Assume $\complementoffirst{J}{m-2}=\setof{j_1\leq\dots\leq j_a}$,
$\complementoffirst{I}{m-2}=\setof{i_1\leq\dots\leq i_a}$ and $J_m=\setof{j_{s_1}\leq\cdots\leq j_{s_b}}$.
Then the special choice $\overline{I_m}=\setof{i_{s_1}\leq\cdots\leq i_{s_b}}$ (i.e., with
respect to the relative ordering, ``$\overline{I_m}$ is the same subset as $J_m$ '')
and $\overline{I_{m-1}}=\complementoffirst{I}{m-2}\setminus \overline{I_m}$
determines \EM{uniquely} a partial permutation $\overline{\tau_{m-1}}$
% of $\sigma\of{\compat{I}{J},\scheme{\tau}}$
$$
\overline{\tau_{m-1}}:\complementoffirst{I}{m-2}\to\complementoffirst{J}{m-2}.
$$
According to \eqref{eq:remove-edges}, by construction we have
\begin{equation}
\label{eq:aux}
\sgn\of{\overline{\tau_{m-1}}}=\sgn\of{\tau_{m-1}}\cdot\sgn\of{\tau_m}.
\end{equation}
Now consider $\sigma=\sigma\of{\compat{I}{J},\scheme{\tau}}$ in the innermost sum of
\eqref{eq:multisum}:
\EM{Erasing} the edges corresponding to $\tau_{m-1}$ and $\tau_{m-2}$ from $m_\sigma$ and
\EM{replacing} them by the edges corresponding to $\overline{\tau_{m-1}}$ yields a permutation
$\overline{\sigma}=\tau_1\concat\cdots\tau_{m-2}\concat\overline{\tau_{m-1}}$
(which, of course, complies to the partition scheme $\scheme{\overline{\tau}}=\scheme{\tau_1\concat\cdots\tau_{m-2}\concat\overline{\tau_{m-1}}}$).
Since by \eqref{eq:remove-edges} together with \eqref{eq:aux} we have
$$
\sgn\of{\overline{\tau_{m-1}}} = \sgn\of{\tau_{m-1}\concat\tau_m}\cdot\signsumset{{I_m}}{\complementoffirst{I}{m-2}}\cdot\signsumset{J_m}{\complementoffirst{J}{m-2}}
$$
and (clearly)
$$
\sigma\setminus\pas{\tau_{m-1}\concat\tau_m}=\overline{\sigma}\setminus\overline{\tau_{m-1}},
$$
we also have (again by \eqref{eq:remove-edges})
$$
\sgn\of\sigma=\signsumset{{I_m}}{\complementoffirst{I}{m-2}}\cdot\signsumset{J_m}{\complementoffirst{J}{m-2}}\cdot\sgn\of{\overline{\sigma}}.
$$
%
%Now look at the innermost sum:
%$\overline{\tau}\defeq\tau_{m-1}\concat\tau_m$ is a partial permutation of $\sigma$,
%$$
%\overline{\tau}:\complementoffirst{I}{m-2}\to\complementoffirst{J}{m-2}.
%$$
%
%%Let $X=\setof{x_1,\dots,x_m}\subset\Z$ be an ordered
%%set, and let $S=\setof{x_{i_1},\dots,x_{i_k}}$, $k\leq m$, be a subset. In this
%%situation, we define
%%$$\sumset{S}{X}\defeq\sum_{j=1}^k i_j.$$
%
%By \eqref{eq:remove-edges},
%we have $\sgn\of{\overline{\tau}}=\sgn\of{\tau_{m-1}}\cdot\sgn\of{\tau_m}\cdot\signsumset{I_m}{\complementoffirst{I}{m-2}}\cdot\signsumset{J_m}{\complementoffirst{J}{m-2}}$.
Hence the innermost sum of \eqref{eq:multisum} can be written as
$$
\sgn\of{\overline{\tau}}\cdot
\pas{
\sum_{I_{m-1}\subseteq\complementoffirst{I}{m-2}}
\!\!\!\!\!\!
\signsumset{I_m}{\complementoffirst{I}{m-2}}\cdot\signsumset{J_m}{\complementoffirst{J}{m-2}}
\cdot d_{I_{m-1}}
\cdot d_{I_m}
}\cdot\sgn\of{\overline\sigma}.$$
If we can show that this last sum equals
$\det\of X\cdot \det\of{\matcominor{X}{\complementoffirst{I}{m-2}}{\complementoffirst{J}{m-2}}}$, then
\eqref{eq:showthis} follows by induction, since the $\pas{m-1}$--fold sum in \eqref{eq:multisum}
thus reduces to
an $\pas{m-2}$--fold sum, which corresponds to the partition-scheme $\scheme{\overline{\tau}}=\scheme{\tau_1\concat\tau_2\concat\dots,\tau_{m-2}\concat\overline{\tau_{m-1}}}$.
\subsection{(A generalization of) Laplace's theorem}
Luckily, 
%
%Careful
%accounting for the signs leads to an (iterated) application of Laplace's Theorem.
%
%Let $X=\setof{x_1,\dots,x_m}\subset\Z$ be an ordered
%set, and let $S=\setof{x_{i_1},\dots,x_{i_k}}$, $k\leq m$, be a subset. In this
%situation, we define
%$$\sumset{S}{X}\defeq\sum_{j=1}^k i_j.$$
%
%\begin{thm}[Laplace's Theorem]
%\label{thm:Laplace}
%Let $a=\pas{a_{i,j}}_{(i,j)\in\range{m}\times\range{m}}$ be an $m\times m$-matrix.
%Consider some fixed set $I\subseteq\range{m}$.
%Then we have
%\begin{equation}
%\label{eq:laplace-general}
%\det\of{a} = %\pas{-1}^{\setsum{\range{m}}}\cdot
%\sum_{\substack{J\subseteq\range{m},\\\cardof{J}=\cardof{I}}}
%\sgn\of{I,J}\cdot
%\det{\matminor{a}{I}{J}}.
%\end{equation}
%\end{thm}
a generalization (see \cite[section 148]{Muir:1933}) of Laplace's Theorem
serves as the closer for our argumentation:
\begin{thm}
\label{thm:muir148}
Let $a$ be an $\pas{m+k}\times\pas{m+ k}$-matrix, and let
$1\leq i_1<i_2<\cdots<i_m\leq m+k$ and
$1\leq j_1<j_2<\cdots<j_m\leq m+k$ be (the indices of) $k$ fixed rows and
$k$ fixed columns of $a$. Denote the set of these (indices of) rows and columns by $R$ and $C$,
respectively.
%and set $A\defeq\setof{j_1,\dots,j_k}$ and 
%$O\defeq\setof{j_{k+1},\dots,j_{2\cdot k}}$. Let $A^\prime\defeq\range{m+2\cdot k}\setminus A$
%and $O^\prime\defeq\range{m+2\cdot k}\setminus O$.
% Let 
%respectively, of $a$. 
Consider some fixed % list $1\leq r_1<r_2<\cdots<r_k\leq m$
set $I\subseteq R$.
Then we have:
\begin{equation}
\label{eq:muir148}
\det\of{a}\cdot\det\of{\matcominor{a}{R}{C}} = %\pas{-1}^{\setsum{\range{m}}}\cdot
\sum_{\substack{J\subseteq C,\\\card{J}=\card{I}}}
{\signsumset{I}{R}\cdot\signsumset{J}{C}}\cdot
\det\of{\matminor{a}{\overline{R\setminus I}}{\overline{C\setminus J}}}
\cdot
\det\of{\matminor{a}{\overline{I}}{\overline{J}}}
.
\end{equation}
\end{thm}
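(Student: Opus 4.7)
My plan is to prove \eqref{eq:muir148} by a combinatorial sign-cancellation argument in the spirit of the rest of this paper, interpreting both sides as signed generating functions of superimposed matchings in $\completebip{n}{n}$, where $n=m+k$.

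First I would expand both sides combinatorially. The \LHS\ $\det\of{a}\cdot\det\of{\matcominor{a}{R}{C}}$ is the signed generating function over pairs $\pas{\pi,\rho}$, where $\pi\in\symm_n$ is a perfect matching of $\completebip{n}{n}$ and $\rho$ is a perfect matching of the induced bipartite subgraph on rows $\overline{R}$ and columns $\overline{C}$. After absorbing the prefactor $\signsumset{I}{R}\cdot\signsumset{J}{C}$ inside each summand, each term on the \RHS\ is a signed generating function over pairs $\pas{\sigma,\tau}$ of bijections $\overline{R\without I}\to\overline{C\without J}$ and $\overline{I}\to\overline{J}$. In either case, superimposing the two (partial) matchings produces an edge-multiset on $\completebip{n}{n}$ with the same incidence profile: rows in $\overline{R}$ and columns in $\overline{C}$ are covered exactly twice, whereas rows in $R$ and columns in $C$ are covered exactly once.

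Next, for each admissible edge-multiset $E$, I would show that the signed count of \LHS-decompositions of $E$ equals the signed count of \RHS-decompositions (summed over all $J$) via a sign-preserving correspondence. Given an \LHS-decomposition $\pas{\pi,\rho}$, the edges $\setof{\pas{r,\pi\of r}:r\in R\without I}$ can only belong to $\tau$ on the \RHS\ (since $\sigma$ has no row in $R\without I$); reassigning them accordingly and then distributing the $\rho$-edges between $\sigma$ and $\tau$ to balance coverage on $\overline{R}\times\overline{C}$ determines a unique triple $\pas{J,\sigma,\tau}$ with $J\defeq C\without\pi\of{R\without I}$, provided that $\pi\of{R\without I}\subseteq C$. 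The \RHS-decompositions not arising this way must then cancel pairwise under a sign-reversing involution that swaps which of $\sigma$ or $\tau$ uses a given edge from $\overline{R}$ to $C$, moving one element between $J$ and $C\without J$ accordingly.

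The sign bookkeeping is governed by Corollary~\ref{cor:remove-edges}: separating the partial permutation $\pi|_I$ from $\pi$ introduces a factor $\signsumset{I}{\range n}\cdot\signsumset{\pi\of I}{\range n}$, and after tracking the additional sign changes from the reassignment of $\overline{R}$-rows between $\sigma$ and $\tau$, the combined prefactor collapses to $\signsumset{I}{R}\cdot\signsumset{J}{C}$, matching \eqref{eq:muir148}. The main obstacle will be the clean specification of the sign-reversing involution that cancels the excess \RHS-decompositions, together with the verification that all ``ranks-within-subsets'' collapse to the asserted prefactor. This delicate combinatorial bookkeeping is the technical heart of the argument and is carried out in full in \cite[proof of Theorem 6]{Fulmek:2012}.
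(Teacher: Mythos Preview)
The paper does not actually prove Theorem~\ref{thm:muir148}; it merely cites \cite[proof of Theorem~6]{Fulmek:2012} for a combinatorial proof, and your proposal ultimately defers to the very same reference. In that sense you are fully aligned with the paper, and your added sketch---interpreting both sides as signed generating functions of superimposed partial matchings with the same incidence profile on $\completebip{m+k}{m+k}$, then matching \LHS- and \RHS-decompositions where possible and cancelling the surplus via a sign-reversing involution---is exactly the flavour of argument carried out in the cited source, so there is nothing to correct.
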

A combinatorial
proof for this identity is given in \cite[proof of Theorem 6]{Fulmek:2012}: So altogether,
we achieved a ``purely combinatorial'' proof for \eqref{eq:vivanti}. \hfill\qedsymbol
%Multiply \eqref{eq:muir148} by $\det\of a$ and expand $\det\of{a}\cdot\det\of{\matminor{a}{\overline{I}}{\overline{J}}}$ according
%to \eqref{eq:muir148}: iterate!

\bibliography{paper}

\end{document}